\documentclass[12pt,reqno]{amsart}

\usepackage{amssymb,amsmath,amsthm,amstext,latexsym,amsfonts}
\oddsidemargin=0.125in
\evensidemargin=0.125in
\textwidth=6in              

\usepackage{color}
\usepackage{hyperref}
\hypersetup{
  colorlinks=true,
  linkcolor=magenta,
  citecolor=blue,
}

\usepackage[T1]{fontenc}
\newcommand{\s}[1]{\ensuremath{\boldsymbol{#1}}}

\title[A matrix approach to the Yang multiplication theorem]{A matrix approach to the Yang multiplication theorem}
\author{Akihiro Munemasa}
\address{Research Center for Pure and Applied Mathematics\\
Graduate School of Information Sciences\\
Tohoku University\\ Japan}
\email{munemasa@math.is.tohoku.ac.jp}

\author{Pritta Etriana Putri}
\address{Research Center for Pure and Applied Mathematics\\
Graduate School of Information Sciences\\
Tohoku University \\ Japan\and
Combinatorial Mathematics Research Group\\
Institut Teknologi Bandung\\
Bandung\\ Indonesia}
\email{pritta@ims.is.tohoku.ac.jp}
\dedicatory{Dedicated to the memory of Professor Noboru Ito}

\newtheorem{lem}{Lemma}[section]
\newtheorem{thm}[lem]{Theorem}

\theoremstyle{definition}
\newtheorem{dfn}[lem]{Definition}

\DeclareMathOperator{\seq}{seq}
\newcommand{\R}{\mathcal{R}}
\newcommand{\Z}{\mathbb{Z}}
\newcommand{\T}{\mathcal{T}}
\DeclareMathOperator{\supp}{supp}

\newcommand{\F}[4]{\ensuremath{(\phi_{\s{#1}}\phi^*_{\s{#1}}+\phi_{\s{#2}}\phi^*_{\s{#2}}+\phi_{\s{#3}}\phi^*_{\s{#3}}+\phi_{\s{#4}}\phi^*_{\s{#4}}})}
\newcommand{\G}[4]{\ensuremath{(\psi_{\s{#1}}\psi^*_{\s{#1}}+\psi_{\s{#2}}\psi^*_{\s{#2}}+\psi_{\s{#3}}\psi^*_{\s{#3}}+\psi_{\s{#4}}\psi^*_{\s{#4}}})}

\date{\today }

\begin{document}

\begin{abstract}
In this paper, we use two-variable Laurent polynomials attached to
matrices to encode properties of compositions of sequences. The
Lagrange identity in the ring of Laurent polynomials is then used
to give a short and transparent proof of a theorem about
the Yang multiplication. 
\end{abstract}

\maketitle

\section{Introduction}
Many classes of complementary sequences have been investigated in the literature (see \cite{GC}). A quadruple of $(\pm 1)$-sequences $(\s{a},\s{b},\s{c},\s{d})$ of length $m,m,n,n$, respectively, is called \emph{base sequences} if 
\[
N_{\s{a}}(j)+N_{\s{b}}(j)+N_{\s{c}}(j)+N_{\s{d}}(j)=0
\]
for all positive integers $j$, where 
\[
N_{\s{s}}(j)=
\begin{cases}
\sum_{i=0}^{l-j-1} s_is_{i+j} &\text{if $0\leq j<l$},\\
0 &\text{otherwise},
\end{cases}
\]
for $\s{s}=(s_0,\ldots,s_{l-1})\in\{\pm1\}^l$. We denote by $BS(m,n)$ the set of base sequences of length $m$, $m$, $n$, $n$. If $(\s{a},\s{b},\s{c},\s{d})\in BS(m,n)$, then it is complementary with weight $2(m+n)$. In \cite{CH2}, Yang proved the following theorem, which is known as one version of 
the \emph{Yang multiplication theorem}:

\begin{thm}[\text{\cite[Theorem 4]{CH2}}]\label{Y1}
If $BS(m+1,m)\neq\emptyset$ and $BS(n+1,n)\neq\emptyset$, then $BS(m',m')\neq \emptyset$ with $m'=(2m+1)(2n+1)$.  
\end{thm}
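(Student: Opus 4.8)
The plan is to pass everything to Laurent polynomials. For a $(\pm1)$-sequence $\s{s}$ with entries $s_0,\dots,s_{l-1}$ write $\phi_{\s{s}}(x)=\sum_{i}s_ix^{i}\in\Z[x,x^{-1}]$, and equip $\Z[x,x^{-1}]$ with the involution $*$ determined by $x^{*}=x^{-1}$, so that $\phi_{\s{s}}\phi_{\s{s}}^{*}=\sum_{j}N_{\s{s}}(|j|)x^{j}$ packages the whole autocorrelation vector of $\s{s}$ into a single element; in this dictionary a quadruple lies in $BS(p,q)$ exactly when $\F{a}{b}{c}{d}=2(p+q)$. Given $(\s{a},\s{b},\s{c},\s{d})\in BS(m+1,m)$ I pass to the four $\{0,\pm1\}$-polynomials $\mu_1=\tfrac12(\phi_{\s{a}}+\phi_{\s{b}})$, $\mu_2=\tfrac12(\phi_{\s{a}}-\phi_{\s{b}})$, $\mu_3=x^{m+1}\cdot\tfrac12(\phi_{\s{c}}+\phi_{\s{d}})$, $\mu_4=x^{m+1}\cdot\tfrac12(\phi_{\s{c}}-\phi_{\s{d}})$: since $\s{a},\s{b}$ (and $\s{c},\s{d}$) have equal length, their supports partition $\{0,\dots,2m\}$, and $\mu_1\mu_1^{*}+\mu_2\mu_2^{*}+\mu_3\mu_3^{*}+\mu_4\mu_4^{*}=\tfrac12\F{a}{b}{c}{d}=2m+1$. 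Likewise $(\s{p},\s{q},\s{r},\s{s})\in BS(n+1,n)$ yields, in an independent variable $y$, four $\{0,\pm1\}$-polynomials $\nu_1,\dots,\nu_4$ with supports partitioning $\{0,\dots,2n\}$ and $\sum_j\nu_j\nu_j^{*}=2n+1$. It then suffices to manufacture four $\{0,\pm1\}$-Laurent polynomials $\chi_1,\dots,\chi_4$ in one variable, with supports partitioning $\{0,\dots,m'-1\}$ where $m'=(2m+1)(2n+1)$, and with $\sum_k\chi_k\chi_k^{*}$ constant: the order-$4$ Hadamard combination $A_i=\sum_k h_{ik}\chi_k$ then gives four $(\pm1)$-sequences of length $m'$ (each $A_i$ is genuinely a $(\pm1)$-sequence because at every position exactly one $\chi_k$ is nonzero) with $\sum_iA_iA_i^{*}=4\sum_k\chi_k\chi_k^{*}$ constant, i.e.\ an element of $BS(m',m')$.

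The motor for the autocorrelation condition is a Lagrange-type identity in $\Z[x^{\pm1},y^{\pm1}]$ with the involution $x\mapsto x^{-1},\,y\mapsto y^{-1}$. Its two-term form is immediate:
\[
\f{a}{b}\cdot\g{p}{q}=\chi_1\chi_1^{*}+\chi_2\chi_2^{*},\qquad
\chi_1=\phi_{\s{a}}\psi_{\s{p}}-\phi_{\s{b}}\psi_{\s{q}}^{*},\quad
\chi_2=\phi_{\s{a}}\psi_{\s{q}}+\phi_{\s{b}}\psi_{\s{p}}^{*},
\]
since the only cross terms are products of an element of $\Z[x^{\pm1}]$ with a commutator in $\Z[y^{\pm1}]$, hence vanish. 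I would then prove the four-term version $\big(\sum_i\mu_i\mu_i^{*}\big)\big(\sum_j\nu_j\nu_j^{*}\big)=\sum_{k=1}^{4}\chi_k\chi_k^{*}$ with each $\chi_k$ an explicit $\Z$-bilinear expression in the $\mu_i$, the $\nu_j$, and their $*$'s. The transparent bookkeeping device is the advertised matrix approach: attach to $(\mu_1,\dots,\mu_4)$ a square matrix $M$ over $\Z[x^{\pm1}]$ with $MM^{*}=(2m+1)I$, attach to $(\nu_1,\dots,\nu_4)$ a matrix $M'$ over $\Z[y^{\pm1}]$ with $M'M'^{*}=(2n+1)I$, and read $\chi_1,\dots,\chi_4$ (possibly up to a common monomial factor) off one row of $MM'$; since $M'M'^{*}$ is a scalar matrix, $(MM')(MM')^{*}=(2m+1)(2n+1)I=m'\cdot I$, so the $*$-norm of that row is the constant $m'$.

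To descend to one variable, note that the $\mu_i$ have supports in $\{0,\dots,2m\}$ and the $\nu_j$ in $\{0,\dots,2n\}$, so every $\chi_k$ — being bilinear in them, up to a monomial — has support in the grid $\{0,\dots,2m\}\times\{0,\dots,2n\}$, provided $M,M'$ are chosen so that no $*$ escapes this box (or the escape is absorbed into the monomial factor). The specialization $y=x^{2m+1}$ sends $(a,b)\mapsto a+(2m+1)b$, the base-$(2m+1)$ digit map, which is a bijection from that grid onto $\{0,1,\dots,m'-1\}$. One checks that after this specialization the $\chi_k$ have $\{0,\pm1\}$ coefficients, that their supports partition $\{0,\dots,m'-1\}$ (the bilinear recipe places each grid point in exactly one $\chi_k$), and that $\sum_k\chi_k\chi_k^{*}$ is still the constant $m'$; the Hadamard step of the first paragraph then delivers the required element of $BS(m',m')$.

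The step I expect to be the main obstacle is making the four-term $*$-Lagrange identity coexist with the support bookkeeping. The naive quaternion product $(\mu_1+\mu_2 i+\mu_3 j+\mu_4 k)(\nu_1+\nu_2 i+\nu_3 j+\nu_4 k)$ does produce four bilinear $\chi_k$ that place each grid point in exactly one support, but because $*$ does not fix $\Z[x^{\pm1},y^{\pm1}]$ the off-diagonal part of $\sum_k\chi_k\chi_k^{*}$ — the contributions of index pairs $((i,j),(i',j'))$ for which $e_ie_j$ and $e_{i'}e_{j'}$ are parallel, which produce $\mu_i\mu_{i'}^{*}\nu_j\nu_{j'}^{*}$ rather than its $*$-symmetrization — does not cancel by itself. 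The real work is to choose the matrices $M,M'$ (equivalently, how the eight polynomials $\mu_i,\nu_j$ are distributed into two $2\times2$-block quaternionic matrices, together with the interleaving exponent) so that exactly these surviving cross terms are annihilated, using the disjointness of the $\mu$- and $\nu$-supports and the substitution $y=x^{2m+1}$. Once such a choice is pinned down the identity is a routine verification and the chain above closes; finding it is the crux.
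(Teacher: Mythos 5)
Your overall architecture---encode autocorrelations as $\phi\phi^*$ with $x^*=x^{-1}$, prove a four-term $*$-twisted Lagrange identity in $\Z[x^{\pm1},y^{\pm1}]$, arrange supports on the grid $\{0,\dots,2m\}\times\{0,\dots,2n\}$, and collapse by the digit map $y=x^{2m+1}$---is exactly the strategy of the paper (and of Yang's original argument). But the proof is not complete, because you defer precisely the two ingredients that constitute the entire difficulty. First, the four-term identity is never exhibited. It exists (Yang's Lagrange identity: with $q=af^*+cg-b^*e+dh$, $r=bf^*+dg^*+a^*e-ch^*$, $s=ag^*-cf-bh-d^*e$, $t=bg-df+ah^*+c^*e$ one has $qq^*+rr^*+ss^*+tt^*=(aa^*+bb^*+cc^*+dd^*)(ee^*+ff^*+gg^*+hh^*)$), but writing ``finding it is the crux'' is an acknowledgement that the heart of the argument is missing, not a proof of it.

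Second, your descent device does not mesh with that identity as smoothly as you assume. With the sum/difference splitting $\mu_1=\tfrac12(\phi_{\s{a}}+\phi_{\s{b}})$, etc., the sets $\supp(\mu_i)$ form an essentially arbitrary partition of $\{0,\dots,2m\}$, and the identity unavoidably involves starred factors such as $a^*e$ and $af^*$, whose supports are the \emph{reflections} $-\supp(\mu_i)$, $-\supp(\nu_j)$. To keep the sixteen rectangles inside the grid and tiling it you must insert compensating monomials, and these corrections differ from band to band while having to remain compatible with the identity; this is exactly the complication the paper's introduction isolates in Yang's proof (the factors $x^{2m+(1-n)(2m+1)}$, $x^{-n(2m+1)}$, \dots). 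So your plan, carried out, reproduces the awkwardness of the original argument rather than the paper's simplification, and you have not carried it out. The paper's fix is a different disjointing device: interleave with zeros ($\s{a}/0$, $0/\s{c}$) and use the centered polynomial $\psi_{\s{a}}(x)=x^{1-l}\phi_{\s{a}}(x^2)$, so that every support is the full set of even or odd positions---classes invariant under reversal, whence $\psi^*_{\s{a}'}=\psi_{\s{a}'^*}$ has the same support pattern and no monomial corrections arise. Each of $Q,R,S,T$ then has all entries $\pm1$, so $\seq(Q),\dots,\seq(T)$ are directly the required base sequences and no final Hadamard recombination is needed. Your two-term identity and the weight count $4(2m+1)(2n+1)=2(m'+m')$ are correct, but without the explicit identity and a reversal-compatible support structure the chain does not close.
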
 

The well-known Hadamard conjecture states that Hadamard matrices of order $4n$ exist for every positive integer $n$. 
A consequence of Theorem~\ref{Y1} is the existence of a Hadamard
matrix of order $8m'$ for a positive integer $m'$ 
satisfying the hypotheses.
Indeed, a class of sequences called $T$-sequences with length $2m'$ can be obtained from $BS(m',m')$ \cite{DD2}, and Hadamard matrices of order $8m'$ can be produced from $T$-sequences with length $2m'$ by using Goethals--Seidel arrays \cite{T}. For more information on $T$-sequences, we refer the reader to \cite{GC, KH1, KH2, CK1}. 

In order to prove Theorem \ref{Y1}, Yang used the Lagrange identity for polynomial rings. Let $\mathbb{Z}[x^{\pm1}]$ be the ring of Laurent polynomials over $\mathbb{Z}$ and  
 $*: \Z[x^{\pm1}]\rightarrow \Z[x^{\pm1}]$ be the involutive automorphism defined by $x\mapsto x^{-1}$. Let $\s{a}=(a_0,\dots, a_{l-1})\in \Z^l$. We define 
the \emph{Hall polynomial} $\phi_{\s{a}}(x)\in \Z[x^{\pm1 }]$ of
$\s{a}$ by
\begin{align*}
\phi_{\s{a}}(x)&=\sum_{i=0}^{l-1}a_ix^i.
\end{align*}
It is easy to see that a quadraple $(\pm1)$-sequences $(\s{a},\s{b},\s{c},\s{d})$ of length $m,m,n,n$, respectively, is a base sequences if and only if
\[
\F{a}{b}{c}{d}(x)=2(m+n). 
\]
Suppose $(\s{a},\s{b},\s{c},\s{d})\in BS(n+1,n)$ and $(\s{f},\s{g},\s{h},\s{e})\in BS(m+1,m)$. 
The proof of Theorem \ref{Y1} in \cite{CH2} is by establishing the identity 
\begin{align}
&(\phi_{\s{q}}\phi^*_{\s{q}}+\phi_{\s{r}}\phi^*_{\s{r}}+\phi_{\s{s}}\phi^*_{\s{s}}+\phi_{\s{t}}\phi^*_{\s{t}})(x)\notag\\
&=
(\phi_{\s{a}}\phi^*_{\s{a}}+\phi_{\s{b}}\phi^*_{\s{b}}+\phi_{\s{c}}\phi^*_{\s{c}}+\phi_{\s{d}}\phi^*_{\s{d}})(x^2)(\phi_{\s{e}}\phi^*_{\s{e}}+\phi_{\s{f}}\phi^*_{\s{f}}+\phi_{\s{g}}\phi^*_{\s{g}}+\phi_{\s{h}}\phi^*_{\s{h}})(x^{2(2m+1)}),
\label{id}
\end{align}
after defining the sequences $\s{q}$, $\s{r}$, $\s{s}$, $\s{t}$ appropriately such that, in particular, 
\begin{align*}
\phi_{\s{q}}(x)&=\phi_{\s{a}}(x^2)\phi_{\s{f}^*}(x^{2(2m+1)})+x\phi_{\s{c}}(x^2)\phi_{\s{g}}(x^{2(2m+1)})\\
&\quad-x^{2(2m+1)}\phi_{\s{b}^*}(x^2)\phi_{\s{e}}(x^{2(2m+1)})+x^{2(2m+1)+1}\phi_{\s{d}}(x^2)\phi_{\s{h}}(x^{2(2m+1)}).
\end{align*}
A key to the proof is the Lagrange identity (see \cite[Theorem L]{CH2}): given $a$, $b$, $c$, $d$, $e$, $f$, $g$, $h$ in a commutative ring with an involutive automorphism $*$, set 
\begin{equation}\label{LL}
\begin{aligned}
q&=af^*+cg-b^*e+dh,\\
r&=bf^*+dg^*+a^*e-ch^*,\\
s&=ag^*-cf-bh-d^*e,\\
t&=bg-df+ah^*+c^*e.
\end{aligned}
\end{equation}
Then 
\begin{equation}\label{Lag}
qq^*+rr^*+ss^*+tt^*
=(aa^*+bb^*+cc^*+dd^*)(ee^*+ff^*+gg^*+hh^*).
\end{equation}

However, the derivation of (\ref{id}) from (\ref{Lag}) is not so immediate since one has to define $a,b,c,d,e,f, g,h,$ as 
\begin{align*}
&\phi_{\s{a}}(x^2), \phi_{\s{b}}(x^2), x\phi_{\s{c}}(x^2), x\phi_{\s{d}}(x^2),\\
& x^{2m+(1-n)(2m+1)}\phi_{\s{e}}(x^{2(2m+1)}),x^{-n(2m+1)}\phi_{\s{f}}(x^{2(2m+1)}), \\
&x^{-n(2m+1)}\phi_{\s{g}}(x^{2(2m+1)}),x^{(1-n)(2m+1)}\phi_{\s{h}}(x^{2(2m+1)}),
\end{align*}
rather than 
\[
\phi_{\s{a}}(x^2), \phi_{\s{b}}(x^2), \phi_{\s{c}}(x^2), \phi_{\s{d}}(x^2),
\phi_{\s{e}}(x^{2(2m+1)}),\phi_{\s{f}}(x^{2(2m+1)}),
\phi_{\s{g}}(x^{2(2m+1)}),\phi_{\s{h}}(x^{2(2m+1)}),
\]
respectively. 
We note that \DJ okovi\'c and Zhao \cite{DD} observed some connection
between the Yang multiplication theorem and the octonion algebra. 
More information on the
Yang multiplication theorem and constructions of complementary sequences can be found in \cite{CK2}. 

In this paper, we give a more straightforward proof of Theorem \ref{Y1}.
Our approach is by constructing  a matrix $Q$ from the eight sequences  $\s{a},\s{b},\s{c},\s{d},\s{e},\s{f},\s{g},\s{h}$ and produce Laurent polynomials $\psi_{\s{s}}(x)$ for $\s{s}\in \{\s{a},\s{b},\s{c},\s{d},\s{e},\s{f},\s{g},\s{h}\}$ of single variable and a Laurent polynomial $\psi_Q(x,y)$ of two variables for a matrix $Q$, such that  
\[
\psi_Q(x,y)=\psi_{\s{a}}(x)\psi_{\s{f}}(y)+\psi_{\s{c}}(x)\psi_{\s{g}}(y)+
\psi_{\s{b}}(x)\psi_{\s{e}}(y)+\psi_{\s{d}}(x)\psi_{\s{h}}(y).
\] 
This gives an interpretation of the Lagrange identity in term of sequences and matrices, i.e. there exist matrices $Q,R,S,T$ such that
\begin{align*}
&(\psi_Q\psi^*_Q+\psi_R\psi^*_R+\psi_S\psi^*_S+\psi_T\psi^*_T)(x,y)\\
&=(\psi_{\s{a}}\psi^*_{\s{a}}+\psi_{\s{b}}\psi^*_{\s{b}}+\psi_{\s{c}}\psi^*_{\s{c}}+\psi_{\s{d}}\psi^*_{\s{d}})(x)
(\psi_{\s{e}}\psi^*_{\s{e}}+\psi_{\s{f}}\psi^*_{\s{f}}+\psi_{\s{g}}\psi^*_{\s{g}}+\psi_{\s{h}}\psi_{\s{h}}^*)(y).
\end{align*}
Then (\ref{id}) follows immediately by noticing $\psi_{Q}(x,x^{(2n+1)})=\psi_{\s{q}}(x)$ and $(\psi_{\s{a}}\psi^*_{\s{a}})(x)=(\phi_{\s{a}}\phi^*_{\s{a}})(x^2)$. 
 
The paper is organized as follows. In Section 2, we will define a Laurent polynomial $\psi_{\s{a}}(x)$ for a sequence $\s{a}$ and introduce basic properties of $\psi_{\s{a}}(x)$. We will also show how to combine sequences and matrices to produce new sequences and matrices, eventually leading to a construction of a matrix from a given set of eight sequences. Finally, in Section 3, we will prove Theorem \ref{Y1} as a consequence of the Lagrange identity in the ring of Laurent polynomials of two variables. We note here that Theorem \ref{Y1} \cite[Theorem 4]{CH2} is known as one of 
the Yang multiplication theorem. Other versions of 
the Yang multiplication theorem will be investigated in subsequent papers.

\section{Preliminary Results}
Let $\R$ be a commutative ring with identity and let $*$ be an involutive automorphism of $\R$.  Moreover, let $\R[x^{\pm1}]$ be the ring of Laurent polynomials over $\R$ and  
 $*: \R[x^{\pm1}]\rightarrow \R[x^{\pm1}]$ be the extension of the involutive automorphism $*$ of $\R$ defined by $x\mapsto x^{-1}$. 

\begin{dfn}  \label{def:2.5}
Let $\s{a}=(a_0,\dots a_{l-1})\in \R^l$. 
We define 
the \emph{Hall polynomial} $\phi_{\s{a}}(x)\in \R[x^{\pm1 }]$ of
$\s{a}$ by
\begin{align*}
\phi_{\s{a}}(x)&=\sum_{i=0}^{l-1}a_ix^i.
\end{align*}
We define a Laurent polynomial $
\psi_{\s{a}}(x)\in \R[x^{\pm 1}]$ by
\begin{align*}
\psi_{\s{a}}(x)&=x^{1-l}\phi_{\s{a}}(x^2).
\end{align*}
\end{dfn}
 
Hall polynomials have been used not only by Yang, but also others. See \cite{C} and references therein. For a sequence $\s{a}=(a_0,\ldots, a_{l-1})\in \R^l$ of length $l$  we define $\s{a}^*\in \R^l$ by $(a_{l-1}^*, \ldots, a_0^*)$. It follows immediately that $\s{a}^{**}=\s{a}$ for every $\s{a}\in \R^l$. 

\begin{dfn}\label{DF1}
For a sequence $\s{a}=(a_0,\ldots, a_{l-1})$ of length $l$ with entries in $\R$, we define \emph{the non-periodic autocorrelation $N_{\s{a}}$ of $\s{a}$} by 
\[
N_{\s{a}}(j)=
\begin{cases}
\sum_{i=0}^{l-j-1} a_ia_{i+j}^* &\text{if $0\leq j<l$},\\
0 &\text{otherwise}.
\end{cases}
\]
We say that a set of sequences $\{\s{a}_1,\ldots,\s{a}_n\}$ 
not necessarily all of
the same length, is \textit{complementary with weight $w$} if 
\[
\sum_{i=1}^{n}N_{{\s{a}}_i}(j)=
\begin{cases}
w&\text{if $j=0$},\\ 
0&\text{otherwise}.
\end{cases}
\]
\end{dfn}

By Definition~\ref{DF1} with $\R=\Z$, we see that $(\s{a},\s{b},\s{c},\s{d})\in BS(m,n)$ if and only if $\{\s{a},\s{b},\s{c},\s{d}\}$ is complementary with weight $2(m+n)$.  

\begin{lem}\label{lem:psi}
Let $l$ be a positive integer and $\s{a}\in \R^l$. Then 
\[
\psi_{\s{a}^*}(x)=\psi^{*}_{\s{a}}(x).
\]
\end{lem}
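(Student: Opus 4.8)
The plan is to unwind the definitions directly. By Definition~\ref{def:2.5}, for a sequence $\s{a}\in\R^l$ we have $\psi_{\s{a}}(x)=x^{1-l}\phi_{\s{a}}(x^2)$, and the same formula applies to $\s{a}^*$ since $\s{a}^*$ also has length $l$; thus $\psi_{\s{a}^*}(x)=x^{1-l}\phi_{\s{a}^*}(x^2)$. On the other side, since $*$ on $\R[x^{\pm1}]$ fixes $\R$-coefficients only up to applying $*$ to them and sends $x\mapsto x^{-1}$, we compute $\psi_{\s{a}}^*(x) = (x^{1-l})^* \, \phi_{\s{a}}(x^2)^* = x^{l-1}\,\phi_{\s{a}}^*(x^{-2})$. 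So the whole statement reduces to the single identity
\[
x^{1-l}\phi_{\s{a}^*}(x^2) = x^{l-1}\phi_{\s{a}}^*(x^{-2}),
\]
or equivalently $\phi_{\s{a}^*}(y)=y^{l-1}\phi_{\s{a}}^*(y^{-1})$ after the substitution $y=x^2$.

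The key step, then, is to establish this Hall-polynomial identity, and it is a routine index reversal. Writing $\s{a}=(a_0,\dots,a_{l-1})$, so that $\s{a}^*=(a_{l-1}^*,\dots,a_0^*)$ with $i$-th entry $a_{l-1-i}^*$, I would expand
\[
\phi_{\s{a}^*}(y)=\sum_{i=0}^{l-1}a_{l-1-i}^*\,y^i,
\]
and compare with
\[
y^{l-1}\phi_{\s{a}}^*(y^{-1}) = y^{l-1}\sum_{j=0}^{l-1}a_j^*\,y^{-j}=\sum_{j=0}^{l-1}a_j^*\,y^{l-1-j}.
\]
Substituting $i=l-1-j$ in the second sum makes the two expressions coincide term by term. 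This uses only that $*$ is additive and fixes the monomial degrees (acting just on coefficients) on the first sum, together with $*(x)=x^{-1}$ on the prefactor $x^{1-l}$.

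I do not anticipate a genuine obstacle here; the statement is essentially bookkeeping. The one place to be slightly careful is keeping track of which $*$ is acting where: the $*$ on the left of the desired identity ($\psi_{\s{a}^*}$) is the sequence-reversal-with-coefficient-involution from the paragraph preceding Lemma~\ref{lem:psi}, while the $*$ on the right ($\psi_{\s{a}}^*$) is the ring involution on $\R[x^{\pm1}]$; the content of the lemma is precisely that these two operations agree on $\psi$. It is also worth noting that the formula is independent of the choice of $l$ in the sense that padding $\s{a}$ with a leading or trailing zero changes both $\psi_{\s{a}}$ and $\psi_{\s{a}^*}$ compatibly, but since the lemma fixes $l$ this need not be addressed. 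I would present the proof as the one-line substitution above, perhaps displayed in a short \texttt{align*} chaining $\psi_{\s{a}^*}(x)=x^{1-l}\phi_{\s{a}^*}(x^2)=x^{1-l}\sum_i a_{l-1-i}^* x^{2i}=\sum_i a_i^* x^{2(l-1-i)+1-l}=x^{l-1}\sum_i a_i^* x^{-2i}=\psi_{\s{a}}^*(x)$.
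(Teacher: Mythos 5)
Your proof is correct and is exactly the routine index-reversal computation that the paper leaves to the reader (its proof of this lemma is just the word ``Straightforward''). The chained identity $\psi_{\s{a}^*}(x)=x^{1-l}\sum_i a_{l-1-i}^*x^{2i}=\sum_i a_i^*x^{l-1-2i}=\psi_{\s{a}}^*(x)$ checks out, and your remark that the lemma's content is the agreement of the sequence-level $*$ with the ring involution is the right way to read it.
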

\begin{proof}
Straightforward. 
\end{proof}

\begin{lem}\label{L24}
For sequences $\s{a}_1, \ldots, \s{a}_n$ with entries in $\R$, the following are equivalent. 
\begin{enumerate}
\item $\s{a}_1, \ldots, \s{a}_n$ are complementary with weight $w$, 
\item $\sum_{i=1}^n(\phi_{\s{a}_i}\phi^*_{\s{a}_i})(x)=w$, 
\item $\sum_{i=1}^n(\psi_{\s{a}_i}\psi^*_{\s{a}_i})(x)=w$. 
\end{enumerate}
\end{lem}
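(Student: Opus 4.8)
The plan is to prove the three-way equivalence by showing (i)$\iff$(ii) directly from the definitions, and then (ii)$\iff$(iii) via the simple substitution relating $\psi$ and $\phi$. For the equivalence of (i) and (ii), the key observation is that the coefficient extraction of $(\phi_{\s{a}}\phi^*_{\s{a}})(x)$ recovers the non-periodic autocorrelation: writing $\phi_{\s{a}}(x)=\sum_{i=0}^{l-1}a_ix^i$ and $\phi^*_{\s{a}}(x)=\sum_{k=0}^{l-1}a_k^*x^{-k}$, the product is $\sum_{i,k}a_ia_k^*x^{i-k}$, so the coefficient of $x^j$ is $\sum_{i-k=j}a_ia_k^*=\sum_{i=0}^{l-j-1}a_ia_{i+j}^*=N_{\s{a}}(j)$ for $0\le j<l$, and for $j<0$ the coefficient is $\sum_{i=0}^{l+j-1}a_{i-j}a_i^*$, which I would identify with a conjugate of $N_{\s{a}}(-j)$ (it equals $(N_{\s{a}}(-j))^*$). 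Thus $(\phi_{\s{a}}\phi^*_{\s{a}})(x)=N_{\s{a}}(0)+\sum_{j>0}\bigl(N_{\s{a}}(j)x^j+(N_{\s{a}}(j))^*x^{-j}\bigr)$.

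First I would record this expansion as the workhorse identity. Summing over $i=1,\dots,n$ and comparing coefficients of each power $x^j$: the constant term of $\sum_i(\phi_{\s{a}_i}\phi^*_{\s{a}_i})(x)$ is $\sum_i N_{\s{a}_i}(0)$, and the coefficient of $x^j$ for $j>0$ is $\sum_i N_{\s{a}_i}(j)$. Hence $\sum_i(\phi_{\s{a}_i}\phi^*_{\s{a}_i})(x)=w$ (a constant) if and only if $\sum_i N_{\s{a}_i}(0)=w$ and $\sum_i N_{\s{a}_i}(j)=0$ for all $j>0$; since each $N_{\s{a}_i}$ is supported on nonnegative arguments, this is exactly the statement that $\s{a}_1,\dots,\s{a}_n$ are complementary with weight $w$. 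That settles (i)$\iff$(ii). One small point to handle carefully is that the lengths of the $\s{a}_i$ may differ, so the range of $j$ with a nonzero contribution varies with $i$; but since $N_{\s{a}}(j)=0$ once $j$ exceeds the length, the sums are all finite and the argument is unaffected.

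For (ii)$\iff$(iii), I would use $\psi_{\s{a}}(x)=x^{1-l}\phi_{\s{a}}(x^2)$ from Definition~\ref{def:2.5} together with Lemma~\ref{lem:psi}, which gives $\psi^*_{\s{a}}(x)=\psi_{\s{a}^*}(x)=x^{1-l}\phi_{\s{a}^*}(x^2)=x^{1-l}\phi^*_{\s{a}}(x^2)$. Multiplying, the powers $x^{1-l}$ cancel against $x^{l-1}$ and we get $(\psi_{\s{a}}\psi^*_{\s{a}})(x)=(\phi_{\s{a}}\phi^*_{\s{a}})(x^2)$ for each $\s{a}$ of length $l$. Summing over $i$, $\sum_i(\psi_{\s{a}_i}\psi^*_{\s{a}_i})(x)=\sum_i(\phi_{\s{a}_i}\phi^*_{\s{a}_i})(x^2)$, and since $x\mapsto x^2$ is an injective substitution on Laurent polynomials (a Laurent polynomial in $x^2$ is constant iff the original is constant), the right-hand side equals $w$ precisely when $\sum_i(\phi_{\s{a}_i}\phi^*_{\s{a}_i})(x)=w$. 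This closes the cycle.

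I do not expect a serious obstacle here; the proof is essentially bookkeeping. The one place that merits attention is the behaviour of the negative-degree coefficients of $\phi_{\s{a}}\phi^*_{\s{a}}$ when $\R$ is a general commutative ring with involution rather than $\Z$ with the trivial involution: one must not blithely assert that the coefficient of $x^{-j}$ equals that of $x^j$, only that it is its image under $*$. This does not affect the equivalence, since vanishing of $N_{\s{a}_i}(j)$ for $j>0$ forces vanishing of its $*$-image as well, but it is the detail I would be most careful to state correctly.
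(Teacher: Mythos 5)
Your proof is correct and follows essentially the same route as the paper, which simply declares (i)$\Leftrightarrow$(ii) straightforward and derives (ii)$\Leftrightarrow$(iii) from the identity $(\psi_{\s{a}}\psi^*_{\s{a}})(x)=(\phi_{\s{a}}\phi^*_{\s{a}})(x^2)$. The only blemishes are two harmless bookkeeping slips: the coefficient of $x^{j}$ for $j>0$ in $\phi_{\s{a}}\phi^*_{\s{a}}$ is $(N_{\s{a}}(j))^*$ rather than $N_{\s{a}}(j)$ (immaterial, as you yourself note), and $\phi_{\s{a}^*}(x^2)$ equals $x^{2(l-1)}\phi^*_{\s{a}}(x^2)$ rather than $\phi^*_{\s{a}}(x^2)$, so $\psi^*_{\s{a}}(x)=x^{l-1}\phi^*_{\s{a}}(x^2)$ — which is in fact the form your subsequent cancellation step tacitly uses.
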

\begin{proof}
It is straightforward to check that (i) is equivalent to (ii). Equivalence of (ii) and (iii) is clear since for any sequence $\s{a}$, $\phi_{\s{a}}(x^2)\phi^*_{\s{a}}(x^2)=\psi_{\s{a}}(x)\psi^*_{\s{a}}(x)$ from Definition~\ref{def:2.5}. 
\end{proof}

\begin{dfn}
\label{df1}
Let $\s{a}=(a_0,\ldots, a_{l-1})\in\R^l$. Define 
\begin{align*}
\s{a}/0=(a_0,0,a_1,\ldots,0, a_{l-1})\in \R^{2l-1}, \quad
0/\s{a}=(0,a_0,0,\ldots,a_{l-1},0)\in \R^{2l+1}.
\end{align*}
\end{dfn}

\begin{lem}\label{lem1}
For every $\s{a}\in\R^l$, 
\[\psi_{\s{a}/0}(x)=\psi_{0/\s{a}}(x)=\psi_{\s{a}}(x^2).\]
\end{lem}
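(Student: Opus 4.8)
The statement to prove is Lemma~\ref{lem1}: for every $\s{a}\in\R^l$,
\[
\psi_{\s{a}/0}(x)=\psi_{0/\s{a}}(x)=\psi_{\s{a}}(x^2).
\]

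The plan is to unwind the definitions directly; no clever idea is needed, only careful bookkeeping of the length shifts and the substitution $x\mapsto x^2$ in the Hall polynomial.

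First I would compute $\phi_{\s{a}/0}(x)$. By Definition~\ref{df1}, $\s{a}/0=(a_0,0,a_1,0,\ldots,0,a_{l-1})$ has length $2l-1$, and its $2i$-th entry is $a_i$ while odd-indexed entries vanish; hence $\phi_{\s{a}/0}(x)=\sum_{i=0}^{l-1}a_i x^{2i}=\phi_{\s{a}}(x^2)$. Then, using Definition~\ref{def:2.5} with length $2l-1$,
\[
\psi_{\s{a}/0}(x)=x^{1-(2l-1)}\phi_{\s{a}/0}(x^2)=x^{2-2l}\phi_{\s{a}}(x^4)
=\bigl(x^2\bigr)^{1-l}\phi_{\s{a}}\bigl((x^2)^2\bigr)=\psi_{\s{a}}(x^2),
\]
where the last equality is exactly the definition of $\psi_{\s{a}}$ evaluated at $x^2$.

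Next I would do the same for $0/\s{a}=(0,a_0,0,a_1,\ldots,a_{l-1},0)$, which has length $2l+1$ with $a_i$ sitting in position $2i+1$. Thus $\phi_{0/\s{a}}(x)=\sum_{i=0}^{l-1}a_i x^{2i+1}=x\,\phi_{\s{a}}(x^2)$, and
\[
\psi_{0/\s{a}}(x)=x^{1-(2l+1)}\phi_{0/\s{a}}(x^2)=x^{-2l}\cdot x^2\phi_{\s{a}}(x^4)
=x^{2-2l}\phi_{\s{a}}(x^4)=\psi_{\s{a}}(x^2),
\]
matching the computation above. Combining the two displays gives $\psi_{\s{a}/0}(x)=\psi_{0/\s{a}}(x)=\psi_{\s{a}}(x^2)$, as claimed.

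There is no real obstacle here; the only thing to watch is that the extra factor of $x$ appearing in $\phi_{0/\s{a}}$ (from the leading zero shifting indices up by one) is precisely cancelled by the change in the normalizing exponent $x^{1-l}$ caused by the length growing from $2l-1$ to $2l+1$, so both variants collapse to the same Laurent polynomial. I would present the argument as the two short displayed computations above together with one line identifying the exponents.
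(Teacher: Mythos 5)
Your proof is correct and follows exactly the paper's own computation: identify $\phi_{\s{a}/0}(x)=\phi_{\s{a}}(x^2)$ and $\phi_{0/\s{a}}(x)=x\phi_{\s{a}}(x^2)$, then apply Definition~\ref{def:2.5} with lengths $2l-1$ and $2l+1$ to see both normalizations reduce to $x^{2-2l}\phi_{\s{a}}(x^4)=\psi_{\s{a}}(x^2)$. No differences worth noting.
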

\begin{proof}
By Definition~\ref{def:2.5} and Definition~\ref{df1} , we have
\begin{align*}
\psi_{\s{a}/0}(x)&=x^{1-(2l-1)}\phi_{\s{a}/0}(x^2)=x^{2-2l}\phi_{\s{a}}(x^4)
=\psi_{\s{a}}(x^2),  \\
\psi_{0/\s{a}}(x)&=x^{1-(2l+1)}\phi_{0/\s{a}}(x^2)=x^{-2l}x^2\phi_{\s{a}}(x^4)
=\psi_{\s{a}}(x^2). 
\end{align*}
\end{proof}

Now, we will define a Laurent polynomial of two variables for arbitrary matrices. 
Let $\R[x^{\pm1},y^{\pm1}]$ be the ring of Laurent polynomials in two variables $x,y$. We define an involutive ring automorphism $*:\R[x^{\pm1},y^{\pm1}]\rightarrow \R[x^{\pm1},y^{\pm1}]$ by $x\mapsto x^{-1}$, $y\mapsto y^{-1}$ and $a\mapsto a^*$ for $a\in \R$. 

\begin{dfn}\label{seq}
For $A\in \R^{m \times n}$, we denote the row vectors of a matrix $A$ by $\s{a}_0, \ldots,\s{a}_{m-1}$. Define 
\[
\seq(A)=(\s{a}_0\mid \s{a}_1\mid\cdots\mid \s{a}_{m-1})\in \R^{mn},\\
\]
where $\mid$ denotes concatenation, and
\[
\psi_A(x,y)=\sum_{i=0}^{m-1}\psi_{\s{a}_i}(x)y^{2i+1-m}. 
\]
\end{dfn}
Clearly, we have $\psi_{A\pm B}(x,y)=\psi_{A}(x,y)\pm\psi_{B}(x,y)$ for every $A,B\in \R^{m\times n}$. Note that we may regard $\R^n$ as $\R^{1\times n}$. So, for every $\s{a}\in \R^n$, we have $\s{a}^t\in \R^{n\times 1}$ where $t$ denotes the transpose of a matrix. 

\begin{lem}\label{star}
Let $\s{f}\in \R^m$ and $\s{a}\in \R^n$. Then 
\[\psi_{\s{f}^t\s{a}}(x,y)=\psi_{\s{a}}(x)\psi_{\s{f}}(y).\]
\end{lem}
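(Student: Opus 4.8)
The plan is to compute both sides directly from Definition~\ref{seq} and Definition~\ref{def:2.5} and check that they agree. Write $\s{f}=(f_0,\dots,f_{m-1})\in\R^m$, so that $\s{f}^t\in\R^{m\times 1}$ and hence $\s{f}^t\s{a}\in\R^{m\times n}$ is the matrix whose $i$-th row is $f_i\s{a}$ for $i=0,\dots,m-1$. Applying the definition of $\psi_A$ for a matrix $A$, I would write
\[
\psi_{\s{f}^t\s{a}}(x,y)=\sum_{i=0}^{m-1}\psi_{f_i\s{a}}(x)\,y^{2i+1-m}.
\]

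The key observation is that $\psi$ is $\R$-linear in the sequence: for a scalar $f_i\in\R$ and a sequence $\s{a}\in\R^n$ we have $\phi_{f_i\s{a}}(x)=f_i\phi_{\s{a}}(x)$ directly from the formula $\phi_{\s{a}}(x)=\sum_j a_j x^j$, and therefore $\psi_{f_i\s{a}}(x)=x^{1-n}\phi_{f_i\s{a}}(x^2)=f_i\,x^{1-n}\phi_{\s{a}}(x^2)=f_i\,\psi_{\s{a}}(x)$. Substituting this into the sum gives
\[
\psi_{\s{f}^t\s{a}}(x,y)=\sum_{i=0}^{m-1}f_i\,\psi_{\s{a}}(x)\,y^{2i+1-m}=\psi_{\s{a}}(x)\sum_{i=0}^{m-1}f_i\,y^{2i+1-m}.
\]

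Finally I would recognize the remaining sum as $\psi_{\s{f}}(y)$: since $\s{f}$ has length $m$, Definition~\ref{def:2.5} gives $\psi_{\s{f}}(y)=y^{1-m}\phi_{\s{f}}(y^2)=y^{1-m}\sum_{i=0}^{m-1}f_i y^{2i}=\sum_{i=0}^{m-1}f_i y^{2i+1-m}$, which is exactly the factor appearing above. Hence $\psi_{\s{f}^t\s{a}}(x,y)=\psi_{\s{a}}(x)\psi_{\s{f}}(y)$, as claimed. I do not anticipate any real obstacle here; the only point requiring a little care is bookkeeping with the exponent shifts $1-n$ and $1-m$ coming from the two applications of Definition~\ref{def:2.5}, and making sure the row index $i$ in $\psi_A$ matches the exponent of $y$ in $\psi_{\s{f}}$, which it does because both run over $0,\dots,m-1$ with the same shift $2i+1-m$.
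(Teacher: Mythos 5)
Your proof is correct and follows essentially the same route as the paper's: expand $\psi_{\s{f}^t\s{a}}(x,y)$ row by row via Definition~\ref{seq}, use that the $i$-th row is $f_i\s{a}$ so $\psi_{f_i\s{a}}=f_i\psi_{\s{a}}$, factor out $\psi_{\s{a}}(x)$, and identify the remaining sum with $\psi_{\s{f}}(y)$. Your explicit verification of the scalar-linearity step and of the exponent bookkeeping is a bit more detailed than the paper's, but the argument is the same.
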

\begin{proof}
Let $\s{f}=(f_0,\ldots,f_{m-1})$. Then 
\begin{align*}
\psi_{\s{f}^t\s{a}}(x,y)&=
\sum_{i=0}^{m-1}\psi_{(\s{f}^t\s{a})_i}(x)y^{2i+1-m}\\
&=\sum_{i=0}^{m-1}f_i\psi_{\s{a}}(x)y^{2i+1-m}\\
&=\psi_{\s{a}}(x)\sum_{i=0}^{m-1}f_iy^{2i+1-m}\\
&=\psi_{\s{a}}(x)\psi_{\s{f}}(y).
\end{align*}
\end{proof}

\begin{lem}\label{seqA}
If $A\in \R^{m\times n}$, then
 \[\psi_{\seq(A)}(x)=\psi_A(x,x^n).\]
\end{lem}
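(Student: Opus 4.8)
The plan is to unwind the definitions on both sides and match coefficients. Write $A\in\R^{m\times n}$ with row vectors $\s{a}_0,\ldots,\s{a}_{m-1}$, so that by Definition~\ref{seq} we have $\seq(A)=(\s{a}_0\mid\cdots\mid\s{a}_{m-1})\in\R^{mn}$ and $\psi_A(x,y)=\sum_{i=0}^{m-1}\psi_{\s{a}_i}(x)\,y^{2i+1-m}$. Substituting $y=x^n$ into the latter immediately gives $\psi_A(x,x^n)=\sum_{i=0}^{m-1}\psi_{\s{a}_i}(x)\,x^{n(2i+1-m)}$, so the task reduces to showing $\psi_{\seq(A)}(x)$ equals this sum.

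The key step is to expand $\psi_{\seq(A)}(x)$ directly from Definition~\ref{def:2.5}. Since $\seq(A)$ has length $l=mn$, we have $\psi_{\seq(A)}(x)=x^{1-mn}\phi_{\seq(A)}(x^2)$, and $\phi_{\seq(A)}(x)=\sum_{i=0}^{m-1}\sum_{j=0}^{n-1}(a_i)_j\,x^{in+j}=\sum_{i=0}^{m-1}x^{in}\phi_{\s{a}_i}(x)$, because the block $\s{a}_i$ occupies positions $in,\ldots,in+n-1$ in the concatenation. Hence $\phi_{\seq(A)}(x^2)=\sum_{i=0}^{m-1}x^{2in}\phi_{\s{a}_i}(x^2)$, and so
\begin{align*}
\psi_{\seq(A)}(x)&=x^{1-mn}\sum_{i=0}^{m-1}x^{2in}\phi_{\s{a}_i}(x^2)\\
&=\sum_{i=0}^{m-1}x^{2in+1-mn}\phi_{\s{a}_i}(x^2)\\
&=\sum_{i=0}^{m-1}x^{n(2i-m)}\bigl(x^{1-n}\phi_{\s{a}_i}(x^2)\bigr)\\
&=\sum_{i=0}^{m-1}x^{n(2i-m)}x^{n}\psi_{\s{a}_i}(x)\\
&=\sum_{i=0}^{m-1}\psi_{\s{a}_i}(x)\,x^{n(2i+1-m)},
\end{align*}
where the fourth equality uses $\psi_{\s{a}_i}(x)=x^{1-n}\phi_{\s{a}_i}(x^2)$ from Definition~\ref{def:2.5}, valid since each row $\s{a}_i$ has length $n$. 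Comparing with the expression for $\psi_A(x,x^n)$ obtained above finishes the proof.

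There is really no serious obstacle here; the only point requiring a little care is bookkeeping the exponent shift $x^{1-l}$ with $l=mn$ for $\seq(A)$ versus $l=n$ for each row, and checking that the index $in$ of the starting position of block $\s{a}_i$ inside the concatenation lines up so that the powers of $x$ reassemble into exactly the prefactor $y^{2i+1-m}$ with $y=x^n$. I would present the computation as the short display above, perhaps prefaced by the one-line observation that $\phi_{\seq(A)}(x)=\sum_i x^{in}\phi_{\s{a}_i}(x)$, which is the combinatorial heart of the statement.
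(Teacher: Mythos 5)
Your proof is correct and follows essentially the same route as the paper: both expand $\phi_{\seq(A)}(x)=\sum_{i}x^{in}\phi_{\s{a}_i}(x)$ from the block structure of the concatenation and then rebalance the exponents via $\psi_{\s{a}_i}(x)=x^{1-n}\phi_{\s{a}_i}(x^2)$ to recover $\sum_i\psi_{\s{a}_i}(x)x^{n(2i+1-m)}=\psi_A(x,x^n)$. The exponent bookkeeping checks out, so nothing further is needed.
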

\begin{proof}
Let $\s{a}_0,\ldots, \s{a}_{m-1}$ be the row vectors of $A$. Since $\phi_{\seq(A)}(x)=\sum_{i=0}^{m-1}x^{ni}\phi_{\s{a}_i}(x)$, we have 
\begin{align*}
\psi_{\seq(A)}(x)&=x^{1-mn}\phi_{\seq(A)}(x^2)\\
&=x^{1-mn}\sum_{i=0}^{m-1}x^{2ni}\phi_{\s{a}_i}(x^2)\\
&=x^{1-mn}\sum_{i=0}^{m-1}x^{2ni+n-1}\psi_{\s{a}_i}(x)\\
&=\sum_{i=0}^{m-1}x^{n(2i+1-m)}\psi_{\s{a}_i}(x)\\
&=\psi_A(x,x^n). 
\end{align*}
\end{proof}

\section{Main Result}
We will present our result by three steps. The following lemma is essential to describe 
the Yang multiplication theorem by using matrix approach. 
\begin{lem}
\label{lem3}
Let 
\begin{align*}
\s{a},\s{b},\s{c},\s{d}\in \R^{n},\quad
\s{e},\s{f},\s{g},\s{h}\in \R^{m}.
\end{align*}
Set
\begin{align*}
Q&=\s{f}^{*t}\s{a}+\s{g}^t\s{c}-\s{e}^t\s{b}^*+\s{h}^t\s{d},\\
R&=\s{f}^{*t}\s{b}+\s{g}^{*t}\s{d}+\s{e}^t\s{a}^*-\s{h}^{*t}\s{c},\\
S&=\s{g}^{*t}\s{a}-\s{f}^t\s{c}-\s{h}^t\s{b}-\s{e}^t\s{d}^*,\\
T&=\s{g}^{t}\s{b}-\s{f}^t\s{d}+\s{h}^{*t}\s{a}+\s{e}^t\s{c}^*.
\end{align*}
Then
\begin{align*}
&(\psi_Q\psi^*_Q+\psi_R\psi^*_R+\psi_S\psi^*_S+\psi_T\psi^*_T)(x,y)
\\&=(\psi_{\s{a}}\psi^*_{\s{a}}+\psi_{\s{b}}\psi^*_{\s{b}}+\psi_{\s{c}}\psi^*_{\s{c}}+\psi_{\s{d}}\psi^*_{\s{d}})(x)
(\psi_{\s{e}}\psi^*_{\s{e}}+\psi_{\s{f}}\psi^*_{\s{f}}+\psi_{\s{g}}\psi^*_{\s{g}}+\psi_{\s{h}}\psi_{\s{h}}^*)(y).
\end{align*}
\end{lem}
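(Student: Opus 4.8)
The plan is to reduce the matrix identity to the scalar Lagrange identity \eqref{Lag} in the commutative ring $\R[x^{\pm1},y^{\pm1}]$ with its involutive automorphism $*$. First I would apply Lemma~\ref{star} to each of the eight rank-one terms making up $Q$: since $Q=\s{f}^{*t}\s{a}+\s{g}^t\s{c}-\s{e}^t\s{b}^*+\s{h}^t\s{d}$ and $\psi$ is additive on matrices (the remark after Definition~\ref{seq}), we get
\begin{align*}
\psi_Q(x,y)=\psi_{\s{a}}(x)\psi_{\s{f}^*}(y)+\psi_{\s{c}}(x)\psi_{\s{g}}(y)-\psi_{\s{b}^*}(x)\psi_{\s{e}}(y)+\psi_{\s{d}}(x)\psi_{\s{h}}(y),
\end{align*}
and similarly for $\psi_R,\psi_S,\psi_T$. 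Using Lemma~\ref{lem:psi} to write $\psi_{\s{f}^*}(y)=\psi^*_{\s{f}}(y)$ and $\psi_{\s{b}^*}(x)=\psi^*_{\s{b}}(x)$, these four expressions become exactly the right-hand sides of the substitution
\[
a=\psi_{\s{a}}(x),\ b=\psi_{\s{b}}(x),\ c=\psi_{\s{c}}(x),\ d=\psi_{\s{d}}(x),\quad
e=\psi_{\s{e}}(y),\ f=\psi_{\s{f}}(y),\ g=\psi_{\s{g}}(y),\ h=\psi_{\s{h}}(y)
\]
plugged into the definitions \eqref{LL} of $q,r,s,t$ — that is, $\psi_Q(x,y)=q$, $\psi_R(x,y)=r$, $\psi_S(x,y)=s$, $\psi_T(x,y)=t$ with these values. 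This is the whole point of how $Q,R,S,T$ were defined; the verification is a term-by-term match and is routine.

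Next I would invoke the Lagrange identity \eqref{Lag} in $\R[x^{\pm1},y^{\pm1}]$ with the above substitution. Its left-hand side is $\psi_Q\psi_Q^*+\psi_R\psi_R^*+\psi_S\psi_S^*+\psi_T\psi_T^*$ evaluated at $(x,y)$, noting that the ring automorphism $*$ on $\R[x^{\pm1},y^{\pm1}]$ restricted to these elements agrees with the $*$ appearing in the statement (conjugation sends $x\mapsto x^{-1}$, $y\mapsto y^{-1}$, and commutes with all the operations used). Its right-hand side is
\[
\bigl(\psi_{\s{a}}\psi^*_{\s{a}}+\psi_{\s{b}}\psi^*_{\s{b}}+\psi_{\s{c}}\psi^*_{\s{c}}+\psi_{\s{d}}\psi^*_{\s{d}}\bigr)(x)\cdot\bigl(\psi_{\s{e}}\psi^*_{\s{e}}+\psi_{\s{f}}\psi^*_{\s{f}}+\psi_{\s{g}}\psi^*_{\s{g}}+\psi_{\s{h}}\psi^*_{\s{h}}\bigr)(y),
\]
which is precisely the claimed right-hand side. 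So the identity follows immediately.

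The only point requiring care — and the nearest thing to an obstacle — is bookkeeping with the $*$ operation: one must check that $\psi^*_Q(x,y)$, meaning the image of $\psi_Q(x,y)$ under the two-variable conjugation, coincides with substituting $a^*,\dots,h^*$ into the formula for $q^*$, i.e. that forming $\psi$ of a sequence and then conjugating is the same as conjugating the sequence entrywise, reversing, and then forming $\psi$. This is exactly Lemma~\ref{lem:psi} together with the fact that $*$ on $\R[x^{\pm1},y^{\pm1}]$ is a ring homomorphism fixing the structure; once this is noted, everything collapses to the scalar identity and there is nothing further to compute.
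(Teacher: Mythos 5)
Your proposal is correct and follows exactly the paper's own argument: apply Lemma~\ref{star} term by term together with additivity of $\psi$ on matrices and Lemma~\ref{lem:psi} to identify $\psi_Q,\psi_R,\psi_S,\psi_T$ with the Lagrange-identity expressions $q,r,s,t$ under the substitution $a=\psi_{\s{a}}(x),\dots,h=\psi_{\s{h}}(y)$, then invoke \eqref{Lag} in $\R[x^{\pm1},y^{\pm1}]$. Your extra remark about the compatibility of the involution with this substitution is a reasonable point of care, though the paper treats it as immediate.
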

\begin{proof}
By Lemma \ref{lem:psi} and Lemma \ref{star}, we have
\begin{align*}
\psi_Q(x,y)&=\psi_{\s{a}}(x)\psi_{\s{f}}^*(y)+\psi_{\s{c}}(x)\psi_{\s{g}}(y)-\psi_{\s{b}}^*(x)\psi_{\s{e}}(y)+\psi_{\s{d}}(x)\psi_{\s{h}}(y),\\
\psi_R(x,y)&=\psi_{\s{b}}(x)\psi_{\s{f}}^*(y)+\psi_{\s{d}}(x)\psi_{\s{g}}^*(y)+\psi_{\s{a}}^*(x)\psi_{\s{e}}(y)-\psi_{\s{c}}(x)\psi_{\s{h}}^*(y),\\
\psi_S(x,y)&=\psi_{\s{a}}(x)\psi_{\s{g}}^*(y)-\psi_{\s{c}}(x)\psi_{\s{f}}(y)-\psi_{\s{b}}(x)\psi_{\s{h}}(y)-\psi_{\s{d}}^*(x)\psi_{\s{e}}(y),\\
\psi_T(x,y)&=\psi_{\s{b}}(x)\psi_{\s{g}}(y)-\psi_{\s{d}}(x)\psi_{\s{f}}(y)+\psi_{\s{a}}(x)\psi_{\s{h}}^*(y)+\psi_{\s{c}}^*(x)\psi_{\s{e}}(y).
\end{align*}
Thus, by applying the Lagrange identity, the result follows. 
\end{proof}

For the remainder of this section, we fix a multiplicatively closed subset $\T$ of $\R\setminus \{0\}$ satisfying $-1\in \T=\T^*$. Also, we denote $\T_0=\T\cup\{0\}$. Denote by $\supp(\s{a})$ and $\supp(A)$ the set of indices of nonzero entries of a sequence $\s{a}=(a_0,\ldots,a_{l-1})\in\R^l$ and a matrix $A=[a_{ij}]_{0\leq i\leq m-1,0\leq j\leq n-1}\in\R^{m\times n}$, respectively. 
We say that sequences $\s{a},\s{b}$ are \emph{disjoint} if $\supp(\s{a})\cap\supp(\s{b})=\emptyset$. Matrices $A,B$ are also said to be \emph{disjoint} if $\supp(A)\cap\supp(B)=\emptyset$.
\begin{lem}
\label{lem4}
Let $m$ and $n$ be positive integers, 
\begin{align*}
\begin{split}
\s{a},\s{b}&\in \T^{n+1},\\
\s{c},\s{d}&\in \T^{n},\\
\s{f},\s{g}&\in \T^{m+1},\\
\s{h},\s{e}&\in \T^{m}.
\end{split}
\end{align*}
Set
\begin{align*}
\s{a}'=\s{a}/0, \quad \s{b}'=\s{b}/0,\quad \s{c}'=0/\s{c},\quad\s{d}'=0/\s{d},\\
\s{f}'=\s{f}/0, \quad \s{g}'=\s{g}/0,\quad \s{h}'=0/\s{h},\quad\s{e}'=0/\s{e}.
\end{align*}
Write 
\begin{align}\label{Q}
Q&=\s{f}'^{*t}\s{a'}+\s{g}'^t\s{c'}-\s{e}'^t\s{b}'^*+\s{h}'^t\s{d'},\\\label{R}
R&=\s{f}'^{*t}\s{b}'+\s{g}'^{*t}\s{d}'+\s{e}'^t\s{a}'^*-\s{h}'^{*t}\s{c}',\\\label{S}
S&=\s{g}'^{*t}\s{a}'-\s{f}'^t\s{c}'-\s{h}'^t\s{b}'-\s{e}'^t\s{d}'^*,\\\label{T}
T&=\s{g}'^{t}\s{b}'-\s{f}'^t\s{d}'+\s{h}'^{*t}\s{a}'+\s{e}'^t\s{c}'^*.
\end{align}
Then $Q,R,S,T\in \T^{(2m+1)\times (2n+1)}$ satisfy
\begin{align*}
&(\psi_Q\psi^*_Q+\psi_R\psi^*_R+\psi_S\psi^*_S+\psi_T\psi^*_T)(x,y)
\\&=(\psi_{\s{a}}\psi^*_{\s{a}}+\psi_{\s{b}}\psi^*_{\s{b}}+\psi_{\s{c}}\psi^*_{\s{c}}+\psi_{\s{d}}\psi^*_{\s{d}})(x^2)
(\psi_{\s{e}}\psi^*_{\s{e}}+\psi_{\s{f}}\psi^*_{\s{f}}+\psi_{\s{g}}\psi^*_{\s{g}}+\psi_{\s{h}}\psi_{\s{h}}^*)(y^2).
\end{align*}
\end{lem}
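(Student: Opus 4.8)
The plan is to deduce the identity directly from Lemma~\ref{lem3} applied to the eight ``thinned'' sequences $\s{a}',\dots,\s{h}'$, and to obtain the membership $Q,R,S,T\in\T^{(2m+1)\times(2n+1)}$ from a parity analysis of their supports.

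First I would note that $\s{a}',\s{b}',\s{c}',\s{d}'$ all lie in $\R^{2n+1}$ (the lengths $2(n+1)-1$ and $2n+1$ coincide) and that $\s{e}',\s{f}',\s{g}',\s{h}'$ all lie in $\R^{2m+1}$, so Lemma~\ref{lem3} applies to these eight sequences with its $n$ and $m$ replaced by $2n+1$ and $2m+1$. The matrices $Q,R,S,T$ of (\ref{Q})--(\ref{T}) are exactly the ones produced by Lemma~\ref{lem3} from the primed sequences, so that lemma gives
\begin{align*}
&(\psi_Q\psi^*_Q+\psi_R\psi^*_R+\psi_S\psi^*_S+\psi_T\psi^*_T)(x,y)\\
&=(\psi_{\s{a}'}\psi^*_{\s{a}'}+\psi_{\s{b}'}\psi^*_{\s{b}'}+\psi_{\s{c}'}\psi^*_{\s{c}'}+\psi_{\s{d}'}\psi^*_{\s{d}'})(x)\,(\psi_{\s{e}'}\psi^*_{\s{e}'}+\psi_{\s{f}'}\psi^*_{\s{f}'}+\psi_{\s{g}'}\psi^*_{\s{g}'}+\psi_{\s{h}'}\psi_{\s{h}'}^*)(y).
\end{align*}
By Lemma~\ref{lem1} we have $\psi_{\s{a}'}(x)=\psi_{\s{a}}(x^2)$, and applying the involution $*$ (which sends $x\mapsto x^{-1}$), together with Lemma~\ref{lem:psi}, gives $\psi^*_{\s{a}'}(x)=\psi^*_{\s{a}}(x^2)$, so that $(\psi_{\s{a}'}\psi^*_{\s{a}'})(x)=(\psi_{\s{a}}\psi^*_{\s{a}})(x^2)$; the same holds for $\s{b},\s{c},\s{d}$, and on the other side for $\s{e},\s{f},\s{g},\s{h}$ with $y^2$ in place of $y$. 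Substituting these into the display yields the asserted identity.

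For the membership statement I would argue by supports. Each of $Q,R,S,T$ is a sum of four rank-one matrices $\s{u}^{t}\s{v}$ in which $\s{u}\in\R^{2m+1}$ is one of $\s{e}',\s{f}',\s{g}',\s{h}'$ (possibly starred) and $\s{v}\in\R^{2n+1}$ is one of $\s{a}',\s{b}',\s{c}',\s{d}'$ (possibly starred). Since $\s{a}/0$ and $\s{f}/0$ are supported on the even indices while $0/\s{c}$ and $0/\s{e}$ are supported on the odd indices, and since the involution $*$ on a sequence of odd length preserves the parity of indices, every such $\s{u}^{t}\s{v}$ is supported in a single parity class of pairs $(i,j)$, and on that class each entry equals a product of a nonzero entry of $\s{u}$ and a nonzero entry of $\s{v}$, both of which lie in $\T=\T^*$, hence the entry lies in $\T$ because $\T$ is multiplicatively closed. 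Going through the four summands of $Q$, one checks that their supports are the four parity classes ``(even, even)'', ``(even, odd)'', ``(odd, even)'', ``(odd, odd)'', which are pairwise disjoint and together exhaust $\{0,\dots,2m\}\times\{0,\dots,2n\}$; the same holds for $R$, $S$, $T$. Hence every entry of each of $Q,R,S,T$ lies in $\T$.

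The step I expect to demand the most care is this last parity bookkeeping: one must verify, term by term in each of $Q,R,S,T$, that the factor coming from $\{\s{e},\s{f},\s{g},\s{h}\}$ fixes the correct row parity and the one from $\{\s{a},\s{b},\s{c},\s{d}\}$ the correct column parity, so that the four supports tile the index set with no entry accidentally forced to vanish. Everything else is a direct substitution via Lemmas~\ref{lem:psi}, \ref{lem1}, and \ref{lem3}.
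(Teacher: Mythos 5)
Your proposal is correct and follows essentially the same route as the paper: apply Lemma~\ref{lem3} to the thinned sequences $\s{a}',\dots,\s{h}'$, convert $\psi_{\s{a}'}\psi^*_{\s{a}'}$ to $(\psi_{\s{a}}\psi^*_{\s{a}})(x^2)$ via Lemma~\ref{lem1}, and verify membership in $\T^{(2m+1)\times(2n+1)}$ by showing the four rank-one summands have pairwise disjoint supports that tile $\{0,\dots,2m\}\times\{0,\dots,2n\}$ (your parity-class description is just the paper's support computation in different words). The only detail worth adding is that the entries of the summand carrying a minus sign lie in $\T$ because $-1\in\T$ and $\T$ is multiplicatively closed.
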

\begin{proof}
Notice that $
\s{a}',\s{b}',\s{c}',\s{d}'\in \T_0^{2n+1}$ and $
\s{e}',\s{f}',\s{g}',\s{h}'\in \T_0^{2m+1}$.

Since $\supp(\s{s}'^*)=\supp(\s{s}')$ for every $\s{s}\in\{\s{a},\s{b},\s{c},\s{d},\s{e},\s{f},\s{g},\s{h}\}$ 
and $(\s{s}', \s{t}')$ is disjoint whenever 
\begin{align*}
\s{s}\in \{\s{a},\s{b}\}, \s{t}\in\{\s{c},\s{d}\}\quad\text{or}\quad\s{s}\in \{\s{f},\s{g}\}, \s{t}\in\{\s{h},\s{e}\}, 
\end{align*}
matrices $A$ and $B$ are disjoint whenever $A\neq B$ and 
\[A,B\in\{\s{f}'^{*t}\s{a'},\s{g}'^t\s{c'},\s{e}'^t\s{b}'^*,\s{h}'^t\s{d'}\}. 
\]
Also, 
\begin{align*}
\supp(\s{a}')\cup\supp(\s{c}')&=\supp(\s{b}'^*)\cup\supp(\s{d}')=\{0,\ldots,2n\}, \\
\supp(\s{f}'^*)=\supp(\s{g}'), &\quad \supp(\s{e}')=\supp(\s{h}').
\end{align*}
Hence
\begin{align*}
\supp(Q)&=\supp(\s{f}'^{*t}\s{a'})\cup\supp(\s{g}'^t\s{c'})\cup\supp(\s{e}'^t\s{b}'^*)\cup\supp(\s{h}'^t\s{d'})\\
&=\{(i,j):i\in\supp(\s{g}'), j\in\supp(\s{a}')\cup\supp(\s{c}')\}\\
&\quad\quad\quad\cup\{(i,j):i\in\supp(\s{e}'), j\in\supp(\s{b}'^*)\cup\supp(\s{d}')\}\\
&=\{(i,j):i\in\supp(\s{g}')\cup\supp(\s{e}'), j\in\{0, \ldots, 2n\}\}\\
&=\{0, \ldots, 2m\}\times\{0, \ldots, 2n\}.
\end{align*}
By a similar argument, we obtain 
\[
\supp(R)=\supp(S)=\supp(T)=\{0, \ldots, 2m\}\times\{0, \ldots, 2n\}.
\]
Therefore, $Q,R,S,T\in \T^{(2m+1)\times (2n+1)}$.
The claimed identity follows from Lemma \ref{lem1} and Lemma \ref{lem3}.
\end{proof}

\begin{thm}\label{T3}
Let $m,n$ be positive integers, and suppose
\begin{align*}
\s{a},\s{b}&\in \T^{n+1}, \\
\s{c},\s{d}&\in \T^{n}, \\
\s{f},\s{g}&\in \T^{m+1}, \\
\s{h},\s{e}&\in \T^{m} 
\end{align*}
satisfy
\begin{align*}
\G{a}{b}{c}{d}(x)&=2(2n+1), \\
\G{e}{f}{g}{h}(x)&=2(2m+1).
\end{align*}
Then there exist $\s{q},\s{r},\s{s},\s{t}\in\T^{(2m+1)(2n+1)}$ such that 
\begin{align*}
\G{q}{r}{s}{t}(x)=4(2m+1)(2n+1).
\end{align*}

\end{thm}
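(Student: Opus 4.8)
The plan is to deduce Theorem~\ref{T3} from Lemma~\ref{lem4} by specializing the second variable $y$ to an appropriate power of $x$, so that the two-variable polynomial identity collapses to the required one-variable identity about $\seq$-sequences. First I would apply Lemma~\ref{lem4} to the eight given sequences, obtaining matrices $Q,R,S,T\in\T^{(2m+1)\times(2n+1)}$ together with the identity
\begin{align*}
&(\psi_Q\psi^*_Q+\psi_R\psi^*_R+\psi_S\psi^*_S+\psi_T\psi^*_T)(x,y)\\
&=\G{a}{b}{c}{d}(x^2)\,\G{e}{f}{g}{h}(y^2).
\end{align*}
By hypothesis the right-hand side equals $2(2n+1)\cdot 2(2m+1)=4(2m+1)(2n+1)$, a constant, for all $x,y$.

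Next I would set $\s{q}=\seq(Q)$, $\s{r}=\seq(R)$, $\s{s}=\seq(S)$, $\s{t}=\seq(T)$; since each of $Q,R,S,T$ lies in $\T^{(2m+1)\times(2n+1)}$, Definition~\ref{seq} gives $\s{q},\s{r},\s{s},\s{t}\in\T^{(2m+1)(2n+1)}$, which is the membership asserted in the theorem. By Lemma~\ref{seqA} applied with $n$ replaced by $2n+1$ (the number of columns), we have $\psi_{\seq(Q)}(x)=\psi_Q(x,x^{2n+1})$, and likewise for $R,S,T$. Substituting $y=x^{2n+1}$ into the displayed two-variable identity therefore yields
\[
(\psi_{\s{q}}\psi^*_{\s{q}}+\psi_{\s{r}}\psi^*_{\s{r}}+\psi_{\s{s}}\psi^*_{\s{s}}+\psi_{\s{t}}\psi^*_{\s{t}})(x)=4(2m+1)(2n+1),
\]
which is exactly $\G{q}{r}{s}{t}(x)=4(2m+1)(2n+1)$, completing the proof.

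There is essentially no hard obstacle here; the work has been front-loaded into Lemma~\ref{lem3} (where the Lagrange identity does the algebra) and Lemma~\ref{lem4} (where the $\seq$-compatible support bookkeeping guarantees $Q,R,S,T$ have entries in $\T$ rather than merely $\T_0$). The one point that needs a moment's care is the index match in the appeal to Lemma~\ref{seqA}: the matrices have $2m+1$ rows and $2n+1$ columns, so the correct specialization is $y=x^{2n+1}$, not $y=x^{2m+1}$; getting the roles of $m$ and $n$ the right way round is the only place an error could creep in. Everything else is a direct substitution of a constant value into an identity already established. This is also the step, via $\psi_Q(x,x^{2n+1})=\psi_{\s q}(x)$ together with $(\psi_{\s a}\psi^*_{\s a})(x)=(\phi_{\s a}\phi^*_{\s a})(x^2)$, that recovers the classical identity~\eqref{id} and hence Theorem~\ref{Y1}.
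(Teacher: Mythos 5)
Your proposal is correct and follows essentially the same route as the paper: apply Lemma~\ref{lem4} to obtain $Q,R,S,T\in\T^{(2m+1)\times(2n+1)}$ with the two-variable identity, set $\s{q}=\seq(Q)$ etc., and specialize $y=x^{2n+1}$ via Lemma~\ref{seqA}. The only detail you flag --- that the substitution must be $y=x^{2n+1}$ (the number of columns) rather than $x^{2m+1}$ --- is exactly the point the paper's proof also relies on.
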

\begin{proof}
Define $Q,R,S,T$ as in (\ref{Q}), (\ref{R}), (\ref{S}), (\ref{T}), respectively. Write 
\[
\s{q}=\seq(Q),\quad\s{r}=\seq(R),\quad\s{s}=\seq(S),\quad\s{t}=\seq(T).
\]
By Lemma \ref{lem4}, $\s{q},\s{r},\s{s},\s{t}\in \T^{(2m+1)(2n+1)}$. Applying Lemma \ref{seqA} and Lemma \ref{lem4}, we have
\begin{align*}
&(\psi_{\s{q}}\psi^*_{\s{q}}+\psi_{\s{r}}\psi^*_{\s{r}}+\psi_{\s{s}}\psi^*_{\s{s}}+\psi_{\s{t}}\psi^*_{\s{t}})(x)\\
&=(\psi_Q\psi^*_Q+\psi_R\psi^*_R+\psi_S\psi^*_S+\psi_T\psi^*_T)(x,x^{2n+1})\\
&=(\psi_{\s{a}}\psi^*_{\s{a}}+\psi_{\s{b}}\psi^*_{\s{b}}+\psi_{\s{c}}\psi^*_{\s{c}}+\psi_{\s{d}}\psi^*_{\s{d}})(x^2)
(\psi_{\s{e}}\psi^*_{\s{e}}+\psi_{\s{f}}\psi^*_{\s{f}}+\psi_{\s{g}}\psi^*_{\s{g}}+\psi_{\s{h}}\psi_{\s{h}}^*)(x^{2(2n+1)})\\
&=4(2m+1)(2n+1).
\end{align*}
Hence the proof is complete. 
\end{proof}

Finally, we see that Theorem \ref{Y1} follows from Theorem \ref{T3} by setting $\T=\{\pm 1\}\subseteq \Z$. Hence, our method gives a more transparent proof of Theorem \ref{Y1}. Indeed, by taking $(\s{a},\s{b},\s{c},\s{d})\in BS(n+1,n)$ and $(\s{f},\s{g},\s{h},\s{e})\in BS(m+1,m)$, the hypotheses in Theorem \ref{T3} are satisfied by Lemma \ref{L24}. Then the resulting sequences $(\s{q},\s{r},\s{s},\s{t})$ belong to $BS(m',m')$ by Lemma \ref{L24} where $m'=(2m+1)(2n+1)$.

\subsection*{Acknowledgements}
We would like to thank Robert Craigen for valuable advice.

\end{document}